\documentclass[reqno]{amsart}

\usepackage[utf8]{inputenc}
\usepackage{geometry}
\usepackage{graphicx}
\usepackage{subcaption}
\usepackage{amsmath}
\usepackage{amsthm}
\usepackage{amssymb}
\usepackage{hyperref}
\usepackage{xcolor}
\usepackage{biblatex}
\usepackage{mathtools}
\usepackage[shortlabels]{enumitem}
\usepackage{mathrsfs}
\usepackage{pgfplots}

\graphicspath{ {images/} }

\geometry{margin=1in}

\newcommand{\dtv}{d_{\mathrm{TV}}}
\newcommand{\Ex}{\mathbb{E}}
\newcommand{\Var}{\mathrm{Var}}
\renewcommand{\Pr}{\mathbb{P}}

\theoremstyle{definition}
\newtheorem{theorem}{Theorem}[section]

\newtheorem{lemma}[theorem]{Lemma}

\newtheorem{remark}{Remark}

\numberwithin{equation}{section}

\addbibresource{tentmap.bib}
\begin{document}
\title{Cutoff in total variation for the shelf shuffle}

\author[R.~Chen]{Ray Chen}
\address[R.~Chen]{University of Washington, Department of Mathematics}
\email{raychen8@uw.edu}
\author[A.~Ottolini]{Andrea Ottolini}
\address[A.~Ottolini]{Yale University, Cowles Foundation}
\email{andrea.ottolini@yale.edu}

\begin{abstract}
  We analyze the mixing time of a popular shuffling machine known as the shelf shuffler. It is a modified version of a $2m$-handed riffle shuffle ($m=10$ in casinos) in which a deck of $n$ cards is split multinomially into
  $2m$ piles, the even-numbered piles are reversed,
  and then cards are dropped from piles proportionally to their
  sizes. We prove that $\frac{5}{4} \log_{2m} n$ shuffles are necessary and sufficient to mix in total variation, and a cutoff occurs with constant window size. We also determine the cutoff profile in terms of the total variation distance between two shifted normal random variables.

\end{abstract}

\subjclass[2020]{60J10} 
\keywords{Cutoff, mixing time, shelf shuffle}
\maketitle
\section{Introduction} 
Several card shuffling machines have been designed for and used by
casinos. One such machine, dubbed a ``shelf shuffler'',
works as follows. The machine accepts a deck and sequentially deals cards
from the bottom onto one of ten shelves, each with equal probability.
Each card is placed above or below all the cards presently on its shelf,
both with equal probability. The shuffled deck is then formed by joining
the piles on the shelves in random order. The obvious question that arises is how effective this machine is at
mixing a deck of cards. Among other goals, the machine seeks to shuffle
cards more efficiently than a dealer, who of course performs repeated
riffle shuffles. Bayer and Diaconis famously proved the ``seven shuffles
theorem'' in \cite{bayer92} for the riffle shuffle, described by the Gilbert-Shannon-Reeds (GSR) model. \\ \\
The shelf shuffler was studied in depth by Diaconis, Fulman, and Holmes 
in \cite{Diaconis_2013}. They analyze an extension of the shelf shuffler
consisting of $m$ shelves. Among their many results, they prove the following composition rule: if an $m_1$-shelf shuffle followed by an $m_2$-shelf shuffle, the result is that of
a $2m_1m_2$-shelf shuffle. In particular, analyzing the long time behavior of a shelf shuffler is equivalent to understanding a shelf shuffles with arbitrary large shelves. In this paper, we analyze the classical mixing time problem: informally, given a tolerance $\varepsilon\in (0,1)$, how many times should we shelf-shuffle a deck of cards to be within $\varepsilon$ from the uniform distribution over all permutations? Our mean result complements the finding \cite{Diaconis_2013} by proving the analogue of the seven-shuffle theorem for shelf shuffle machines in total variation distance. A comparison between our main Theorem \ref{thm:main} and the exact computation for the total variation distance to stationarity in a standard deck of $52$ cards can be seen in Figure~\ref{fig:tv-exact-approx}.

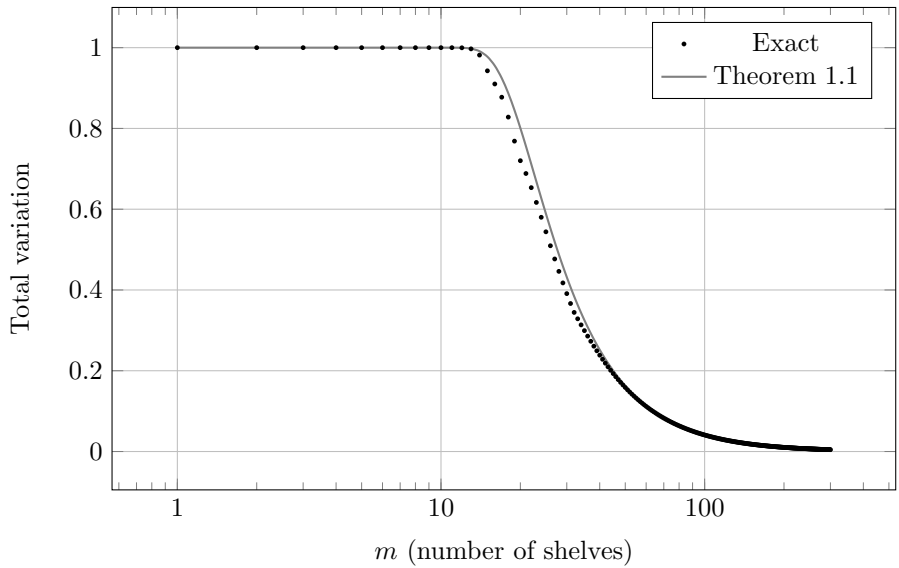
\begin{figure}
    \centering
\begin{tikzpicture}
    \begin{semilogxaxis}[
        xlabel={$m$ (number of shelves)},
        ylabel={Total variation},
        grid=major,     
        width=12cm,
        height=8cm,
        legend pos=north east,
        every axis plot/.append style={thick},
        log ticks with fixed point,
        ]

        \addplot[
            only marks,
            mark=*,
            mark size=0.5pt,
            color=black
        ] coordinates {
            (1, 1.0) (2, 1.0) (3, 1.0) (4, 1.0) (5, 1.0) (6, 1.0) (7, 1.0) (8, 1.0) (9, 1.0) (10, 1.0) (11, 0.99998) (12, 0.99969) (13, 0.99705) (14, 0.98161) (15, 0.94267) (16, 0.9101) (17, 0.87735) (18, 0.82808) (19, 0.76857) (20, 0.72009) (21, 0.6884) (22, 0.65337) (23, 0.61678) (24, 0.58001) (25, 0.54403) (26, 0.50949) (27, 0.47678) (28, 0.44609) (29, 0.4175) (30, 0.39098) (31, 0.36646) (32, 0.34443) (33, 0.32864) (34, 0.31359) (35, 0.2993) (36, 0.28575) (37, 0.27293) (38, 0.26081) (39, 0.24937) (40, 0.23856) (41, 0.22836) (42, 0.21873) (43, 0.20964) (44, 0.20105) (45, 0.19294) (46, 0.18528) (47, 0.17804) (48, 0.17118) (49, 0.1647) (50, 0.15855) (51, 0.15273) (52, 0.14721) (53, 0.14197) (54, 0.137) (55, 0.13227) (56, 0.12778) (57, 0.1235) (58, 0.11943) (59, 0.11555) (60, 0.11185) (61, 0.10833) (62, 0.10496) (63, 0.10175) (64, 0.09867) (65, 0.09574) (66, 0.09293) (67, 0.09023) (68, 0.08766) (69, 0.08519) (70, 0.08282) (71, 0.08055) (72, 0.07836) (73, 0.07627) (74, 0.07426) (75, 0.07232) (76, 0.07046) (77, 0.06867) (78, 0.06694) (79, 0.06528) (80, 0.06368) (81, 0.06214) (82, 0.06065) (83, 0.05921) (84, 0.05783) (85, 0.05649) (86, 0.0552) (87, 0.05395) (88, 0.05274) (89, 0.05158) (90, 0.05045) (91, 0.04935) (92, 0.0483) (93, 0.04727) (94, 0.04628) (95, 0.04532) (96, 0.04439) (97, 0.04348) (98, 0.04261) (99, 0.04176) (100, 0.04093) (101, 0.04013) (102, 0.03935) (103, 0.0386) (104, 0.03786) (105, 0.03715) (106, 0.03646) (107, 0.03578) (108, 0.03513) (109, 0.03449) (110, 0.03387) (111, 0.03326) (112, 0.03267) (113, 0.0321) (114, 0.03154) (115, 0.031) (116, 0.03047) (117, 0.02995) (118, 0.02945) (119, 0.02896) (120, 0.02848) (121, 0.02801) (122, 0.02756) (123, 0.02711) (124, 0.02668) (125, 0.02626) (126, 0.02584) (127, 0.02544) (128, 0.02505) (129, 0.02466) (130, 0.02428) (131, 0.02391) (132, 0.02356) (133, 0.0232) (134, 0.02286) (135, 0.02252) (136, 0.02219) (137, 0.02187) (138, 0.02156) (139, 0.02125) (140, 0.02095) (141, 0.02065) (142, 0.02036) (143, 0.02008) (144, 0.0198) (145, 0.01953) (146, 0.01927) (147, 0.01901) (148, 0.01875) (149, 0.0185) (150, 0.01825) (151, 0.01801) (152, 0.01778) (153, 0.01755) (154, 0.01732) (155, 0.0171) (156, 0.01688) (157, 0.01667) (158, 0.01646) (159, 0.01625) (160, 0.01605) (161, 0.01585) (162, 0.01566) (163, 0.01546) (164, 0.01528) (165, 0.01509) (166, 0.01491) (167, 0.01473) (168, 0.01456) (169, 0.01439) (170, 0.01422) (171, 0.01405) (172, 0.01389) (173, 0.01373) (174, 0.01357) (175, 0.01342) (176, 0.01327) (177, 0.01312) (178, 0.01297) (179, 0.01283) (180, 0.01269) (181, 0.01255) (182, 0.01241) (183, 0.01227) (184, 0.01214) (185, 0.01201) (186, 0.01188) (187, 0.01176) (188, 0.01163) (189, 0.01151) (190, 0.01139) (191, 0.01127) (192, 0.01115) (193, 0.01104) (194, 0.01092) (195, 0.01081) (196, 0.0107) (197, 0.01059) (198, 0.01049) (199, 0.01038) (200, 0.01028) (201, 0.01018) (202, 0.01008) (203, 0.00998) (204, 0.00988) (205, 0.00978) (206, 0.00969) (207, 0.0096) (208, 0.0095) (209, 0.00941) (210, 0.00932) (211, 0.00924) (212, 0.00915) (213, 0.00906) (214, 0.00898) (215, 0.0089) (216, 0.00881) (217, 0.00873) (218, 0.00865) (219, 0.00857) (220, 0.0085) (221, 0.00842) (222, 0.00834) (223, 0.00827) (224, 0.0082) (225, 0.00812) (226, 0.00805) (227, 0.00798) (228, 0.00791) (229, 0.00784) (230, 0.00777) (231, 0.00771) (232, 0.00764) (233, 0.00758) (234, 0.00751) (235, 0.00745) (236, 0.00738) (237, 0.00732) (238, 0.00726) (239, 0.0072) (240, 0.00714) (241, 0.00708) (242, 0.00702) (243, 0.00697) (244, 0.00691) (245, 0.00685) (246, 0.0068) (247, 0.00674) (248, 0.00669) (249, 0.00663) (250, 0.00658) (251, 0.00653) (252, 0.00648) (253, 0.00643) (254, 0.00638) (255, 0.00633) (256, 0.00628) (257, 0.00623) (258, 0.00618) (259, 0.00613) (260, 0.00608) (261, 0.00604) (262, 0.00599) (263, 0.00595) (264, 0.0059) (265, 0.00586) (266, 0.00581) (267, 0.00577) (268, 0.00573) (269, 0.00568) (270, 0.00564) (271, 0.0056) (272, 0.00556) (273, 0.00552) (274, 0.00548) (275, 0.00544) (276, 0.0054) (277, 0.00536) (278, 0.00532) (279, 0.00528) (280, 0.00525) (281, 0.00521) (282, 0.00517) (283, 0.00514) (284, 0.0051) (285, 0.00506) (286, 0.00503) (287, 0.00499) (288, 0.00496) (289, 0.00493) (290, 0.00489) (291, 0.00486) (292, 0.00482) (293, 0.00479) (294, 0.00476) (295, 0.00473) (296, 0.0047) (297, 0.00466) (298, 0.00463) (299, 0.0046) (300, 0.00457)
        };
        \addlegendentry{Exact}

        \addplot[
            smooth,
            color=gray
        ] coordinates {
            (1, 1.0) (2, 1.0) (3, 1.0) (4, 1.0) (5, 1.0) (6, 1.0) (7, 1.0) (8, 1.0) (9, 1.0) (10, 1.0) (11, 0.99998) (12, 0.99964) (13, 0.99764) (14, 0.99125) (15, 0.97761) (16, 0.95527) (17, 0.92459) (18, 0.88724) (19, 0.84537) (20, 0.80107) (21, 0.75605) (22, 0.71161) (23, 0.66862) (24, 0.62765) (25, 0.589) (26, 0.55282) (27, 0.5191) (28, 0.48779) (29, 0.45879) (30, 0.43195) (31, 0.40714) (32, 0.38419) (33, 0.36296) (34, 0.34332) (35, 0.32512) (36, 0.30825) (37, 0.29259) (38, 0.27804) (39, 0.26451) (40, 0.2519) (41, 0.24015) (42, 0.22917) (43, 0.21891) (44, 0.20931) (45, 0.20031) (46, 0.19187) (47, 0.18394) (48, 0.17648) (49, 0.16946) (50, 0.16285) (51, 0.15661) (52, 0.15071) (53, 0.14514) (54, 0.13987) (55, 0.13488) (56, 0.13015) (57, 0.12566) (58, 0.1214) (59, 0.11735) (60, 0.1135) (61, 0.10983) (62, 0.10634) (63, 0.10301) (64, 0.09983) (65, 0.0968) (66, 0.0939) (67, 0.09113) (68, 0.08848) (69, 0.08595) (70, 0.08352) (71, 0.08119) (72, 0.07896) (73, 0.07682) (74, 0.07476) (75, 0.07278) (76, 0.07089) (77, 0.06906) (78, 0.06731) (79, 0.06562) (80, 0.06399) (81, 0.06242) (82, 0.06091) (83, 0.05946) (84, 0.05805) (85, 0.0567) (86, 0.05539) (87, 0.05412) (88, 0.0529) (89, 0.05172) (90, 0.05058) (91, 0.04948) (92, 0.04841) (93, 0.04737) (94, 0.04637) (95, 0.0454) (96, 0.04446) (97, 0.04355) (98, 0.04267) (99, 0.04181) (100, 0.04098) (101, 0.04017) (102, 0.03939) (103, 0.03863) (104, 0.03789) (105, 0.03717) (106, 0.03648) (107, 0.0358) (108, 0.03514) (109, 0.0345) (110, 0.03387) (111, 0.03327) (112, 0.03267) (113, 0.0321) (114, 0.03154) (115, 0.03099) (116, 0.03046) (117, 0.02994) (118, 0.02944) (119, 0.02895) (120, 0.02847) (121, 0.028) (122, 0.02754) (123, 0.02709) (124, 0.02666) (125, 0.02623) (126, 0.02582) (127, 0.02541) (128, 0.02502) (129, 0.02463) (130, 0.02426) (131, 0.02389) (132, 0.02353) (133, 0.02317) (134, 0.02283) (135, 0.02249) (136, 0.02216) (137, 0.02184) (138, 0.02153) (139, 0.02122) (140, 0.02092) (141, 0.02062) (142, 0.02033) (143, 0.02005) (144, 0.01977) (145, 0.0195) (146, 0.01923) (147, 0.01897) (148, 0.01872) (149, 0.01847) (150, 0.01822) (151, 0.01798) (152, 0.01774) (153, 0.01751) (154, 0.01729) (155, 0.01706) (156, 0.01685) (157, 0.01663) (158, 0.01642) (159, 0.01622) (160, 0.01601) (161, 0.01582) (162, 0.01562) (163, 0.01543) (164, 0.01524) (165, 0.01506) (166, 0.01488) (167, 0.0147) (168, 0.01453) (169, 0.01435) (170, 0.01419) (171, 0.01402) (172, 0.01386) (173, 0.0137) (174, 0.01354) (175, 0.01339) (176, 0.01323) (177, 0.01309) (178, 0.01294) (179, 0.0128) (180, 0.01265) (181, 0.01251) (182, 0.01238) (183, 0.01224) (184, 0.01211) (185, 0.01198) (186, 0.01185) (187, 0.01172) (188, 0.0116) (189, 0.01148) (190, 0.01136) (191, 0.01124) (192, 0.01112) (193, 0.01101) (194, 0.01089) (195, 0.01078) (196, 0.01067) (197, 0.01056) (198, 0.01046) (199, 0.01035) (200, 0.01025) (201, 0.01015) (202, 0.01005) (203, 0.00995) (204, 0.00985) (205, 0.00976) (206, 0.00966) (207, 0.00957) (208, 0.00948) (209, 0.00939) (210, 0.0093) (211, 0.00921) (212, 0.00912) (213, 0.00904) (214, 0.00895) (215, 0.00887) (216, 0.00879) (217, 0.00871) (218, 0.00863) (219, 0.00855) (220, 0.00847) (221, 0.00839) (222, 0.00832) (223, 0.00824) (224, 0.00817) (225, 0.0081) (226, 0.00803) (227, 0.00796) (228, 0.00789) (229, 0.00782) (230, 0.00775) (231, 0.00768) (232, 0.00762) (233, 0.00755) (234, 0.00749) (235, 0.00742) (236, 0.00736) (237, 0.0073) (238, 0.00724) (239, 0.00718) (240, 0.00712) (241, 0.00706) (242, 0.007) (243, 0.00694) (244, 0.00689) (245, 0.00683) (246, 0.00677) (247, 0.00672) (248, 0.00667) (249, 0.00661) (250, 0.00656) (251, 0.00651) (252, 0.00646) (253, 0.00641) (254, 0.00635) (255, 0.0063) (256, 0.00626) (257, 0.00621) (258, 0.00616) (259, 0.00611) (260, 0.00606) (261, 0.00602) (262, 0.00597) (263, 0.00593) (264, 0.00588) (265, 0.00584) (266, 0.00579) (267, 0.00575) (268, 0.00571) (269, 0.00567) (270, 0.00562) (271, 0.00558) (272, 0.00554) (273, 0.0055) (274, 0.00546) (275, 0.00542) (276, 0.00538) (277, 0.00534) (278, 0.0053) (279, 0.00527) (280, 0.00523) (281, 0.00519) (282, 0.00516) (283, 0.00512) (284, 0.00508) (285, 0.00505) (286, 0.00501) (287, 0.00498) (288, 0.00494) (289, 0.00491) (290, 0.00487) (291, 0.00484) (292, 0.00481) (293, 0.00478) (294, 0.00474) (295, 0.00471) (296, 0.00468) (297, 0.00465) (298, 0.00462) (299, 0.00459) (300, 0.00456)
        };
        \addlegendentry{Theorem~\ref{thm:main}}

    \end{semilogxaxis}
\end{tikzpicture}
\caption{Total variation after an $m$-shuffle on a $52$-card deck for $1 \le m \le 300$.}
\label{fig:tv-exact-approx}
\end{figure}


\subsection{Main result}
We denote by $\nu_n$ the uniform measure on the set of permutations on $n$ elements. We also denote by $\nu_{n,m}$ the measure on permutation induced by an $m$-shelf shuffle on $n$ elements, when starting from the identity. As already remarked, the composition rule from \cite{Diaconis_2013} allows to analyze the long-term behavior of the Markov chain obtained by repeatedly apply an $m$-shuffle via analyzing $\nu_{n,m}$ for large $m$. We now proceed to give a formal definition of $\nu_{n,m}$. In fact, it is easier to define an \emph{inverse} shelf shuffle, i.e., the time reversal of this chain (for the equivalence between different formulations, we refer the reader to \cite{Diaconis_2013}): cut a deck of $n$ cards into $2m$
piles according to a symmetric multinomial distribution. Reverse the order of
the even-numbered packets. Finally, riﬄe shuﬄe the $2m$ packets together by
the Gilbert–Shannon–Reeds (GSR) distribution, i.e., drop each card sequentially with probability proportional to packet size. The resulting permutation of the deck is such that its inverse is distributed precisely according to $\nu_{n,m}$.
\\
\\ 
We seek to analyze the behavior of the total variation distance
$$
\dtv(\nu_{n,m}, \nu_n)\coloneqq\frac{1}{2}\sum_{\pi}|\nu_n(\pi)-\nu_{n,m}(\pi)|,
$$
where the sum runs over all permutation of $n$ elements, in the case where $n$ and $m$ are large. Our main result answers a question from \cite{Diaconis_Fulman_2023} (Problem $2$ in section 11.5).
\begin{theorem}\label{thm:main}
Fix $c>0$. Then, 
$$
\dtv(\nu_{n,cn^{5/4}}, \nu_n)=
     1-2\Phi\left(\frac{1}{12c^2\sqrt{10}}\right) + O_c\left( \frac{1}{\sqrt{n}} \right).
$$
where $\Phi(x) = \int_{-\infty}^{x} e^{-t^2 / 2}\:dt / \sqrt{2 \pi}$ is the cumulative distribution function of a standard normal random variable.
\end{theorem}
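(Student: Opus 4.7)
I would follow the Bayer--Diaconis blueprint, adapted to the shelf shuffle via the framework of Diaconis--Fulman--Holmes. The key input is that $\nu_{n,m}(\pi)$ depends on $\pi$ only through a single ``descent-like'' statistic $\mathcal{D}(\pi)$ (a variant of the descent count accounting for the reversal of even-numbered packets), together with an explicit formula of Eulerian type
\[
\nu_{n,m}(\pi^{-1}) \;=\; \frac{1}{(2m)^n}\binom{2m+n-1-\mathcal{D}(\pi)}{n},
\]
(or its suitable shelf-shuffle analog). This reduces the total variation to a one-dimensional sum
\[
\dtv(\nu_{n,m},\nu_n) \;=\; \frac{1}{2}\sum_{d} A_{n,d}\left| \frac{1}{n!} - \frac{1}{(2m)^n}\binom{2m+n-1-d}{n}\right|,
\]
where $A_{n,d}$ counts permutations with $\mathcal{D}=d$.

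The next step is a careful Taylor expansion of the ratio $r(d) := n!\,\nu_{n,m}(\pi)=\prod_{j=0}^{n-1}\bigl(1+(j-d)/(2m)\bigr)$ around the mean $\mu_n=\mathbb{E}_{\nu_n}\mathcal{D}$. Crucially, the symmetry inherent in the shelf shuffle (reversing alternate packets) cancels the leading $O(n/m)$ term that drives the mixing of the ordinary GSR riffle; this cancellation is exactly what pushes the cutoff from $(3/2)\log_{2m}n$ down to $(5/4)\log_{2m}n$. Writing $d=\mu_n+\sigma_n t$ with $\sigma_n^2 = \mathrm{Var}_{\nu_n}\mathcal{D}$ and expanding $\log r$, the leading nonvanishing contribution should take the form
\[
\log r(d) \;=\; \frac{\sigma_n^{2}}{m^{2}}\bigl(\alpha\, t^{2} - \beta\bigr) \;+\; O\!\left(\frac{1}{m^{3}}\cdot\mathrm{poly}(d,n)\right),
\]
for explicit combinatorial constants $\alpha,\beta$. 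Setting $m=cn^{5/4}$ and using $\sigma_n^{2}\asymp n$, the prefactor $\sigma_n^{2}/m^{2}$ is of order $1/(c^{2}n^{3/2})$, so exponentiating contributes a non-degenerate profile after weighting by the $A_{n,d}$'s.

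For the weights, I would invoke a local central limit theorem with Edgeworth-type control for $\mathcal{D}$ under $\nu_{n}$ (available because the generating polynomial $\sum_\pi x^{\mathcal{D}(\pi)}$ has only real roots by a Harper-type argument extending the Eulerian case), giving $A_{n,d}/n! = \sigma_n^{-1}\phi((d-\mu_n)/\sigma_n) + O(n^{-1})$ uniformly in $d$. Combining the expansion with the local CLT recasts the TV as
\[
\dtv(\nu_{n,m},\nu_{n}) \;=\; \tfrac{1}{2}\,\mathbb{E}\!\left|\gamma_{c}\bigl(Z^{2}-x_{c}^{2}\bigr)\right| + O_{c}(n^{-1/2}),
\qquad Z\sim N(0,1),
\]
where $\gamma_{c},x_{c}$ are derived from $\alpha,\beta$ and $c$. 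Using $\Pr(Z^{2}\le x_{c}^{2})=2\Phi(x_{c})-1$, the right-hand side collapses to a signed normal-CDF expression that matches $1-2\Phi\!\left(\tfrac{1}{12c^{2}\sqrt{10}}\right)$ once the constants are pinned down: the $12$ arises from $\sigma_n^{2}\sim n/12$ (as for ordinary Eulerian statistics), and the $\sqrt{10}$ should emerge from the second-order coefficient $\alpha$ in the expansion, ultimately from a fourth-moment-type combinatorial identity specific to the shelf statistic.

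\textbf{Main obstacle.} The hard part is obtaining the expansion of $\log r(d)$ to the \emph{exact} order at which a nontrivial profile appears, rigorously controlling the error uniformly in $d$ over the Gaussian window $|d-\mu_n|\lesssim\sqrt{n}\log n$, and verifying that higher-order terms contribute only $O(n^{-1/2})$. A closely related obstacle is establishing the local CLT with a sharp enough remainder for $\mathcal{D}$; while the analogous result for ordinary descents is classical, the shelf-shuffle statistic requires re-deriving the real-rootedness/variance asymptotics and matching them to the Taylor coefficients. The appearance of $\sqrt{10}$ in particular should be tracked through a specific second cumulant computation that I expect to be the most delicate bookkeeping step.
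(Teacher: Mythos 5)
Your plan follows the Bayer--Diaconis ``direct expansion plus local CLT'' blueprint, which the paper deliberately avoids: the authors use the fact that $q_{n,m}(k)$ is monotone in $k$ to write the total variation as $\max_k[F_{V_{n,m}}(k)-F_{V_n}(k)]$, then sandwich $V_{n,m}$ between two \emph{exact} exponential tilts $V^\pm_{n,m}$ of $V_n$ via stochastic domination, and obtain the profile from Warren--Seneta's CLT applied to these tilted independent-Bernoulli sums. Your route is a legitimate alternative in principle (and you correctly identify that a local CLT for the sufficient statistic under $\nu_n$ can be obtained from real-rootedness, matching the Warren--Seneta ingredient), but your proposed expansion is wrong in a way that changes the shape of the answer.

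Concretely, there are two gaps. First, the formula $\nu_{n,m}(\pi^{-1})=\frac{1}{(2m)^n}\binom{2m+n-1-\mathcal D(\pi)}{n}$ is the ordinary $a$-handed riffle formula in the number of \emph{descents}; the shelf-shuffle probability is governed by the number of \emph{valleys} $k$, and $q_{n,m}(k)$ is a \emph{sum} of products of two binomials (equation \eqref{eq:shelf-shuffle-scaling-term}), not a single Eulerian binomial. Your hedge ``or its shelf-shuffle analog'' sweeps under the rug exactly the hard part: the paper's entire Lemma \ref{lem:shelf-shuffle-likelihood-ratio-ratio} is devoted to showing that the ratio $q_{n,m}(k)/q_{n,m}(k-1)$ is nearly constant, $\exp(-1/(4c^2\sqrt n)+O_c(1/n))$, via careful manipulation of this double sum (Pascal's identity twice, then a second-order expansion of the binomial coefficient and a stochastic-domination argument to kill the $\mathbb E[X]$ cross-term). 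None of this falls out of expanding a single binomial.

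Second, and more fundamentally, your claimed expansion $\log r(d)\approx\frac{\sigma_n^2}{m^2}(\alpha t^2-\beta)$ has the wrong shape and the wrong scale. What actually happens is that the per-unit-$k$ log-likelihood ratio shrinks from $O(n/m)$ (riffle) to $O(n^2/m^2)$ (shelf) -- that is the cancellation you allude to -- but it remains a \emph{linear} exponential tilt: $\log r(k)$ is still asymptotically linear in $k$, hence linear in $t$, with slope $\approx -n^2/(4m^2)$. At $m=cn^{5/4}$ this gives $|\log r - \text{const}|\sim\sigma_n\cdot n^2/m^2\sim 1$ across the Gaussian window, producing an $O(1)$ profile, whereas your quadratic prefactor $\sigma_n^2/m^2\sim n^{-3/2}$ would force $r(d)=1+O(n^{-3/2})$ uniformly and hence a vanishing TV. Correspondingly the limiting profile is a total variation between two \emph{shifted} normals, $\dtv(\mathcal N(-1/(6c^2\sqrt{10}),1),\mathcal N(0,1))=2\Phi(1/(12c^2\sqrt{10}))-1$, not an $\mathbb E\bigl|\gamma_c(Z^2-x_c^2)\bigr|$ quantity, and your proposed algebraic reduction of the latter to $1-2\Phi(\cdot)$ does not go through. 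A minor but compounding slip: for valleys $\sigma_n^2=(2n+2)/45\sim 2n/45$, not $\sim n/12$ as for descents, so your constant-tracking would also be off.
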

In fact, 
\begin{remark}
In the proof, we will show $$
\dtv(\nu_{n,cn^{5/4}}, \nu_n)=\dtv\left(
            \mathcal{N}\left(
                -\frac{1}{6c^2 \sqrt{10}}, 1
            \right), \mathcal{N}(0, 1)
        \right)+O_c\left(\frac{1}{\sqrt n}\right)
     $$
 where $\mathcal{N}(\mu, \sigma^2)$ denotes the law of a normal random variable with mean $\mu$ and variance $\sigma^2$. The result then follows from the explicit form of the total variation distance between two normal with the same variance. The two normal distributions capture the behavior of a sufficient statistic for the Markov chain, namely the number of valleys (see Section \ref{sec:numvalleys}), with the main obstacle to mixing being given by a gap in the average number of valleys.
\end{remark}
\begin{remark}
By the composition rule given by \cite[][Corollary 4.2]{Diaconis_2013}, we can rephrase the result in the language of cutoff for Markov chains \cite{diaconis1996cutoff}: the $m$-shelf shuffle exhibits a cutoff at $\frac{5}{4} \log_{2m} n$ with window of constant size. It is interesting to compare this result with \cite{bayer92}. In light of the composition rule, repeating a $1$-shelf shuffle for $k$ times results in a $2^{k-1}$-shelf shuffle. For the choice of
$$
k=\frac{5}{4}\log_2n+\theta
$$
our main result shows that the total variation distance, in the limit as $n\rightarrow+\infty$, decreases from $1$ to $0$ as a function of $\theta\in\mathbb R$. For riffle shuffles a similar result holds with $5/4$ replaced by $3/2$, resulting in a larger mixing time.
\end{remark}


\subsection{Related literature}
We learned about the problem from Chapter $11$ in \cite{Diaconis_Fulman_2023}, where it was posed as an open question (Problem $2$ in section 11.5). In addition to providing an up-to-date survey of the mathematics of many shuffling schemes - including shelf shuffles -- they show versions of our result for the separation and $l_{\infty}$ distance. The first mention of shelf shuffle schemes comes from \cite{Diaconis_2013}, where the authors show the connection with the number of valleys of a permutation -- a sufficient statistic for the model. Some results on the number of valleys, including asymptotic behavior for uniformly random permutation and a useful connection wtih $P$-partitions, can be found in \cite{Fulman_2022, Fulman_2021}. Shelf shuffles can be thought as a variation of the more familiar riffle shuffle \cite{bayer92}. A recent comparison of the two schemes can be found in \cite{silverman2019progressive}. 
\\ \\
Beyond shuffling schemes, our result falls into the larger body of questions about the mixing time of Markov chains \cite{levin2017markov}. Let $\nu_{n,k}$ be the probability law induced by $k$ steps of an ergodic and aperiodic Markov chain with stationary distribution $\nu_n$. Given $\varepsilon\in (0,1)$, the $\varepsilon$-mixing time is defined by
$$
\tau_{n,\varepsilon}\coloneqq\min \{k\geq 0: \dtv(\nu_{n,k},\nu_{n})\leq \varepsilon\},
$$
i.e., the time that it takes for the chain to get within $\varepsilon$ of stationarity in total variation distance. A classical question is to understand the behavior for $\tau_{n,\varepsilon}$ for large $n$, and determine whether it depends on $\varepsilon$ in first approximation. When it does not, this is referred to as the cutoff phenomenon \cite{diaconis1996cutoff}. Rigorous proofs of cutoff are typically hard to obtain, since it requires a very detailed knowledge of the chain. Even rarer are instances in which the precise dependence on $\varepsilon$, thus going beyond the first order approximation of $\tau_{n,\varepsilon}$, can be understood. Notable exceptions in the context of card shuffling are the riffle shuffle \cite{bayer92} and random transpositions \cite{teyssier2020limit} (see also \cite{nestoridi2022limit} for a generalization). 

\subsection{Structure of the paper}
The rest of the paper is structured as follows. In Section \eqref{sec:numvalleys}, we analyze the number of valleys. The role of monotonicity is highlighted in \eqref{sec:monotone}, while sharp estimates for the mean and variances are in \eqref{sec:meanvar}. The main proof is then given in Section \eqref{sec:proofmainthm}. 
\section{Number of valleys}\label{sec:numvalleys}
Let $v(\pi)$ denote the number of valleys in a permutation
$\pi$, i.e., the number of positions $j$ such that $$\pi_{j-1}>\pi_j<\pi_{j+1}.$$ Recall that $\nu_n$ and $\nu_{n,m}$ respectively denote the uniform
measure and the $m$-shelf shuffle measure on the set of permutations.
Let $V_n = v(\pi)$ where $\pi$ is sampled from $\nu_n$, and
let $V_{n,m} = v(\pi)$ where $\pi$ is sampled from $\nu_{n,m}$.
In \cite{Diaconis_2013}, the authors show that $v$ is a sufficient statistic: after any number of steps, conditional on the resulting permutation having $k$ valleys, the resulting permutation is uniformly distributed over all permutation with $k$ valleys. In particular, since $v$ takes integer values inclusively between $0$ and
$u_n\coloneqq\lfloor (n - 1) / 2 \rfloor$, the
total variation between $\nu_n$ and $\nu_{n,m}$ is given by
\begin{equation}
\label{eq:tv-sufficient-statistic}
    \dtv(\nu_{n,m}, \nu_n)
    = \frac{1}{2} \sum_{k = 0}^{u_n}
        \left| \Pr(V_{n,m} = k) - \Pr(V_n = k) \right|.
\end{equation}
\subsection{Monotonicity results}\label{sec:monotone}
In what follows, we denote by $p_n(k), p_{n,m}(k)$ the p.m.f.\ of $V_n$ and $V_{n,m}$, respectively. From \cite[][Theorem 3.1]{Diaconis_2013}, they are related via
\begin{equation}
    p_{n,m}(k)= n! q_{n,m}(k) p_n(k)
\end{equation}
where
\begin{equation}
\label{eq:shelf-shuffle-scaling-term}
    q_{n,m}(k) 
    \coloneqq \sum_{r = k + 1}^{n - k}
        \frac{1}{m^n} \binom{m + n - r}{n} 
        \cdot \frac{1}{2^{n - 1 - 2k}}
            \binom{n - 1 - 2k}{r - 1 - k}
\end{equation}
is the probability that an $m$-shelf shuffle results in
a particular permutation $\pi$ if $\pi$ has $k$ valleys.
\cite[][Corollary 3.3]{Diaconis_2013}
showed that $q_{n,m}(v)$ is monotone decreasing in $v$. All together, this implies that the total variation distance, given by \eqref{eq:tv-sufficient-statistic}, can be rewritten as
\begin{equation}
\label{eq:shelf-shuffle-tv}
    \dtv(\nu_{n,m}, \nu_n) 
    = \max_k \left[\Pr(V_{n,m} \le k) - \Pr(V_n \le k)\right].
\end{equation}
The high-level overview of the proof for Theorem \ref{thm:main} is as follows.
For $V_n$, a central limit theorem is already known \cite{warren_seneta_1996}. The key idea is that $V_{n,m}$ is almost an exponential tilt of
$V_n$: in fact, we will show this by proving that the ratio $$\frac{q_{n,m}(k)}{q_{n,m}(k - 1)}$$ is nearly constant in $k$. To leverage this idea, we will need to exploit another monotonicity argument: we construct exponential tilts $V_{n,m}^-$ and $V_{n,m}^+$ of $V_n$ for which, with probability one, $$V_{n,m}^-\leq V_{n,m}\leq V_{n,m}^+.$$ Both $V_{n,m}^-$ and $V_{n,m}^+$ 
will be shown to satisfy central limit theorems with comparable means and variances, from which we will be able to give a sharp estimate to \eqref{eq:shelf-shuffle-tv}. Notice that, for a normal random variable, an exponential tilt simply corresponds to a shift in the mean.
\begin{remark}
This approach provides an alternative to the
arguments used in \cite[][Theorem 3.4]{Diaconis_2013} and 
\cite[][Section 4]{bayer92}. While they remark that their techniques should be applicable to our setting, the computations become
substantially more involved, while we find our approach to be more tractable.
\end{remark}
To make the heuristic precise, we will need to introduce some notation. For each $n$ and $m$, define 
\[
    \delta_{n,m}^- \coloneqq \min_{1 \le k \le u_n} 
        \frac{q_{n,m}(k)}{q_{n,m}(k - 1)};
    \qquad
    \delta_{n,m}^+ \coloneqq \max_{1 \le k \le u_n} 
        \frac{q_{n,m}(k)}{q_{n,m}(k - 1)}.
\]
Now, define the random variables $V_{n,m}^\pm$ as having
probability mass functions $f_{n,m}^\pm(k) p_n(k)$,
where $f_{n,m}^\pm$ are such that 
\[ 
    \frac{f_{n,m}^\pm (k)}{f_{n,m}^\pm (k - 1)} = \delta_{n,m}^\pm
\]
for each $1 \le k \le u_n$.

We claim that $V_{n,m}$ 
stochastically dominates $V_{n,m}^-$.
Write the p.m.f.\ of $V_{n,m}^-$ as $g_{n,m}(k) p_{n,m}(k)$.
Stochastic domination follows if $g_{n,m}$ is nonincreasing.
Indeed,
\begin{align*}
    \frac{g_{n,m}(k)}{g_{n,m}(k-1)}
    &= \frac{f_{n,m}^-(k) / n! q_{n,m}(k)}
        {f_{n,m}^-(k-1) / n! q_{n,m}(k-1)} \\
    &= \frac{f_{n,m}^-(k) / f_{n,m}^-(k-1)}
        {q_{n,m}(k) / q_{n,m}(k-1)} \\
    &= \frac{\delta_{n,m}^-}
        {q_{n,m}(k) / q_{n,m}(k-1)} \le 1
\end{align*}
for each $1 \le k \le u_n$ from the
definition of $\delta_{n,m}^-$. Likewise,
$V_{n,m}^+$ stochastically dominates $V_{n,m}$.

Of course, this is only useful provided that
$\delta_{n,m}^-$ and $\delta_{n,m}^+$ are not too far apart.
The following lemma confirms this property.
\begin{lemma}
\label{lem:shelf-shuffle-likelihood-ratio-ratio}
    Let $m = cn^{5/4}$ with $c > 0$ fixed. Then,
    for any integer $1 \le k \le u_n$, we have
    \begin{equation}
        \frac{q_{n,m}(k)}{q_{n,m}(k - 1)} = \exp\left( 
            -\frac{1}{4c^2 \sqrt{n}} + O_c \left( \frac{1}{n} \right) 
        \right)
    \end{equation}
    uniformly for the given values of $k$. In particular, 
    $$
    \delta_{n,m}^{\pm}=\exp\left( 
            -\frac{1}{4c^2 \sqrt{n}} + O_c \left( \frac{1}{n} \right)\right).
    $$
    
\end{lemma}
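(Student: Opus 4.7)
The plan is to write $q_{n,m}(k)/q_{n,m}(k-1)$ in a form where a discrete calculus identity isolates the leading asymptotic and reduces the problem to estimating a single tilted moment.

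First I would use the probabilistic representation $q_{n,m}(k) = m^{-n}\Ex_k[\binom{M-s}{n}]$ with $M = m + (n-1)/2$, $N = n-1-2k$, and $s = J - N/2$ for $J \sim \mathrm{Binom}(N,1/2)$. The levels $k$ and $k-1$ can be coupled by adding two independent symmetric $\pm 1/2$ Bernoullis: $s_{k-1} = s_k + Z$ with $Z$ independent of $s_k$, $\Pr(Z = \pm 1) = 1/4$ and $\Pr(Z = 0) = 1/2$. This gives
\[
\Ex_{k-1}\!\left[\tbinom{M-s}{n}\right] = \Ex_k\!\left[\tfrac{1}{4}\tbinom{M-s+1}{n} + \tfrac{1}{2}\tbinom{M-s}{n} + \tfrac{1}{4}\tbinom{M-s-1}{n}\right].
\]
A twofold application of Pascal's rule yields $\binom{M-s+1}{n} - 2\binom{M-s}{n} + \binom{M-s-1}{n} = \binom{M-s-1}{n-2}$, so the bracket simplifies to $\binom{M-s}{n} + \tfrac{1}{4}\binom{M-s-1}{n-2}$. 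Hence
\[
\frac{q_{n,m}(k)}{q_{n,m}(k-1)} = \frac{1}{1 + R_k/4}, \qquad R_k := \frac{\Ex_k[\binom{M-s-1}{n-2}]}{\Ex_k[\binom{M-s}{n}]}.
\]

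The pointwise ratio now simplifies elegantly to $\binom{M-s-1}{n-2}/\binom{M-s}{n} = n(n-1)/[(M-s)(M-s-n+1)]$, and the factorization $(M-s)(M-s-n+1) = (m-s)^2 - (n-1)^2/4$ makes the $s$- and $m$-dependence explicit. Writing $\tilde{\Ex}_k$ for the expectation under the tilt with density $\binom{M-s}{n}/\Ex_k[\binom{M-s}{n}]$, we have $R_k = \tilde{\Ex}_k[n(n-1)/((m-s)^2-(n-1)^2/4)]$. Since $|s| \le N/2 \le n/2 \ll m$, Taylor-expanding the integrand in $s/m$ and $n/m$ yields
\[
R_k = \frac{n(n-1)}{m^2}\!\left\{1 + \frac{2\tilde{\Ex}_k[s]}{m} + O\!\left(\frac{n^2}{m^2}\right)\right\},
\]
from which $R_k = n^2/m^2 + O(1/n)$ follows provided $\tilde{\Ex}_k[s]$ and $\tilde{\Ex}_k[s^2]$ can be controlled at the right order.

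The main technical hurdle is the linear term: the trivial bound $|s| \le n/2$ only gives $|\tilde{\Ex}_k[s]| = O(n)$, producing an error of $O(n^{-3/4})$ in the exponent rather than the required $O(n^{-1})$. Heuristically, $\binom{M-s}{n}/\binom{M}{n} \approx \exp(-sS_1)$ with $S_1 := \sum_{i=0}^{n-1}(M-i)^{-1} \approx n/m$, and since the MGF of the centered binomial $s$ is $\cosh(t/2)^N$, the tilted mean should be $-(N/2)\tanh(S_1/2) = O(n^2/m) = O(n^{3/4})$. To make this rigorous I would use the Stein-type identity for the centered binomial
\[
\Ex[s\,f(s)] = \frac{N}{4}\,\Ex_{J' \sim \mathrm{Binom}(N-1,1/2)}\!\left[f(J'+1-N/2) - f(J'-N/2)\right],
\]
which applied to $f(s) = \binom{M-s}{n}$ and simplified by Pascal reduces $\tilde{\Ex}_k[s]$ to a ratio of two coefficient extractions from very similar generating polynomials; the representation $\Ex_k[\binom{M-s}{n}] = 2^{-(n-1-2k)}[x^n](1+x)^{m+k}(2+x)^{n-1-2k}$ (derived by expanding the MGF of $J$) lets one compare these coefficients directly and obtain the $O(n^{3/4})$ bound uniformly in $k$. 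A similar but easier estimate gives $\tilde{\Ex}_k[s^2]/m^2 = O(n^2/m^2)$, and together these yield $R_k = n^2/m^2 + O(1/n)$; taking logarithms then produces $q_{n,m}(k)/q_{n,m}(k-1) = \exp(-1/(4c^2\sqrt n) + O_c(1/n))$ uniformly in $1 \le k \le u_n$, which immediately gives the same expression for both $\delta^{\pm}_{n,m}$.
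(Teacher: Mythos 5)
Your argument is correct, and its skeleton coincides with the paper's: the probabilistic coupling $s_{k-1}=s_k+Z$ is exactly the paper's double application of Pascal's identity, the second difference $\binom{M-s-1}{n-2}$ and the pointwise ratio $n(n-1)/\bigl[(m-s)^2-(n-1)^2/4\bigr]$ are the same, and both proofs reduce the lemma to showing that the mean of $s$ under the $\binom{M-s}{n}$-tilted centered binomial is $O_c(n^{3/4})$ rather than the trivial $O(n)$. Where you genuinely diverge is in how that key estimate is obtained. The paper recycles its monotonicity machinery: it sandwiches the tilted variable stochastically between exact exponential tilts $X^\pm$ (built from the extreme consecutive ratios of $\binom{m+n-r}{n}$), uses that an exponentially tilted binomial is again binomial with parameter $1/2-n/4m+O(n^2/m^2)$, and reads off $\Ex[X^\pm]=O_c(n^{3/4})$. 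You instead use the size-bias (Stein) identity for the symmetric binomial together with one more application of Pascal, which gives the \emph{exact} formula
\begin{equation*}
\tilde{\Ex}_k[s]\;=\;-\frac{N}{2}\,
\frac{[x^{n-1}]\,(1+x)^{m+k}(2+x)^{N-1}}{[x^{n}]\,(1+x)^{m+k}(2+x)^{N}},
\end{equation*}
and the required bound does follow by the direct comparison you allude to: writing $Q(x)=(1+x)^{m+k}(2+x)^{N-1}$, one has $[x^n]\bigl((2+x)Q\bigr)\ge 2[x^n]Q$ and, termwise, $\binom{m+k}{n-1-j}\le \frac{n}{m+k-n+1}\binom{m+k}{n-j}$, so the displayed ratio is $O(n/m)$ uniformly in $k$, giving $|\tilde{\Ex}_k[s]|=O(Nn/m)=O_c(n^{3/4})$. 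You should spell out that one-line termwise bound, since it is the only place where your sketch is not explicit. Comparing the two routes: the paper's stochastic-domination argument is conceptually economical (it is the same tilting device used for $V_{n,m}^\pm$) and actually yields the asymptotics of the tilted mean, not just an upper bound; your route is more elementary and exact, avoiding the auxiliary variables $\rho_{n,m}^\pm$, $X^\pm$ altogether, at the price of the generating-function bookkeeping.
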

\begin{proof}
    For brevity, let $n_k \coloneqq n - 2k - 1$.
    By \eqref{eq:shelf-shuffle-scaling-term}, we have
    \begin{align*}
        m^n q_{n,m}(k - 1)
        &= \sum_{r = k}^{n - k + 1}
            \binom{m + n - r}{n} \cdot \frac{1}{2^{n_k}} \left[
                \frac{1}{4} \binom{n_k}{r - 2 - k}
                + \frac{1}{2} \binom{n_k}{r - 1 - k}
                + \frac{1}{4} \binom{n_k}{r - k}
            \right] \\
        &= \sum_{r = k + 1}^{n - k}
            \left[
                \frac{1}{4} \binom{m + n - r + 1}{n}
                + \frac{1}{2} \binom{m + n - r}{n}
                + \frac{1}{4} \binom{m + n - r - 1}{n}
            \right] \cdot \frac{1}{2^{n_k}} \binom{n_k}{r - 1 - k}
    \end{align*}
    where in the first step we applied Pascal's identity twice. In particular, this yields    
    \begin{align*}
        & m^n [q_{n,m}(k - 1) - q_{n,m}(k)] \\
        &\qquad = \sum_{r = k + 1}^{n - k}
            \left[
                \frac{1}{4} \binom{m + n - r + 1}{n}
                - \frac{1}{2} \binom{m + n - r}{n}
                + \frac{1}{4} \binom{m + n - r - 1}{n}
            \right] \cdot \frac{1}{2^{n_k}} 
                \binom{n_k}{r - 1 - k} \\
        &\qquad = \sum_{r = k + 1}^{n - k}
            \frac{1}{4} \binom{m + n - r - 1}{n - 2} 
            \cdot \frac{1}{2^{n_k}} \binom{n_k}{r - 1 - k} \\
        &\qquad = \sum_{r = k + 1}^{n - k}
            \frac{n(n - 1)}{4(m + n - r)(m + 1 - r)} 
                \binom{m + n - r}{n}
            \cdot \frac{1}{2^{n_k}} \binom{n_k}{r - 1 - k}.
    \end{align*}
    Let $j \coloneqq r - (n + 1) / 2$. Then, from the choice of $m = cn^{5/4}$ and the fact that $j=O(n)$,
    \begin{align*}
        \frac{n(n - 1)}{4(m + n - r)(m + 1 - r)}
        &= \frac{n(n-1)/4}{(m-j)^2 + (n-1)^2 / 4} \\
        &= \frac{n^2}{4(m-j)^2} \left(
            1 + O_c\left( \frac{1}{\sqrt n} \right)
        \right) \\
        &= \frac{n^2}{4m^2} \left(
            1 + \frac{j}{m} \left( 1 - \frac{j}{m} \right)^{-1}
        \right)^2 \left(
            1 + O_c\left( \frac{1}{\sqrt n} \right)
        \right) \\
        &= \frac{n^2}{4m^2} \left(
            1 + \frac{j}{m} + O \left( \frac{n^2}{m^2} \right)
        \right)^2 \left(
            1 + O_c\left( \frac{1}{\sqrt n} \right)
        \right) \\
        &= \frac{1}{4} \left( 
            \frac{n^2}{m^2} + \frac{2n^2 j}{m^3}
        \right) \left(
            1 + O_c\left( \frac{1}{\sqrt n} \right)
        \right).
    \end{align*}
    Substituting this into
    the previous expression and dividing through by $$m^n q_{n,m}=\sum_{r = k + 1}^{n - k}
         \binom{m + n - r}{n} 
        \cdot \frac{1}{2^{n - 1 - 2k}}
            \binom{n - 1 - 2k}{r - 1 - k},$$
    we can write 
    \begin{align*}
        & \frac{q_{n,m}(k - 1)}{q_{n,m}(k)} - 1 \\
        &\qquad = \frac{1}{4} \sum_{r = k + 1}^{n - k}
            \left( 
                \frac{n^2}{m^2} + \frac{2n^2 j}{m^3}
            \right) \left[
                \frac{1}{m^n q_{n,m}(k)}
                \cdot \binom{m + n - r}{n}
                \cdot \frac{1}{2^{n_k}} \binom{n_k}{j + n_k/2}
            \right] \left(
                1 + O_c\left( \frac{1}{\sqrt n} \right)
            \right) \\
        &\qquad = \frac{1}{4c^2 \sqrt{n}}
            + O_c\left( \frac{1}{n} \right) 
            + O_c(n^{-7/4}) \sum_{r = k + 1}^{n - k}
            j \left[
                \frac{1}{m^n q_{n,m}(k)}
                \cdot \binom{m + n - r}{n}
                \cdot \frac{1}{2^{n_k}} \binom{n_k}{j + n_k/2}
            \right].
    \end{align*}
    We now argue that the last term is also $O_c\left(1/n\right)$. To do so, observe that the term in square brackets, as a function of $j$,
    is the p.m.f.\ $p_X(j)$ of the random variable $X$ obtained by starting with
    $\mathrm{Bin}(n_k, 1/2)$, shifting it to have mean $0$, and
    tilting it by a factor proportional to $\binom{m + n - r}{n}$. Therefore the
    last sum is $\Ex[X]$. Let
    \begin{align*}
        \rho_{n,m}^- \coloneqq \min_{1 \le r \le n}
            \frac{\binom{m + n - r}{n}}{\binom{m + n - (r - 1)}{n}};
        \qquad
        \rho_{n,m}^+ \coloneqq \max_{1 \le r \le n}
            \frac{\binom{m + n - r}{n}}{\binom{m + n - (r - 1)}{n}}
    \end{align*}
    and define random variables $X^-$ and $X^+$ such that
    $X^\pm$ have p.m.f.\ $h^\pm(j) p_X(j)$ with $h^\pm(j)$ having
    common ratio $\rho_{n,m}^\pm$. By the same argument involving
    $V_{n,m}^\pm$, $X$ is stochastically bounded by $X^\pm$, which
    are exponential tilts of a shifted binomial random variable.
    Moreover,
    \[
        \frac{\binom{m + n - r}{n}}{\binom{m + n - (r - 1)}{n}}
        = \frac{m-r+1}{m+n-r+1}
        = 1 - \frac{n}{m+n-r+1}
        = \exp\left[ -\frac{n}{m} + O\left( \frac{n^2}{m^2} \right) \right]
    \]
    using $r \le n = o(m)$ and $e^x=1+x+O(x^2)$.
    Recalling that a $\theta$-tilted $\mathrm{Bin}(n, p)$ distribution
    is a $\mathrm{Bin}(n, pe^\theta / (1 - p + pe^\theta))$
    distribution, we have that $X^\pm + n_k / 2$ are
    $\mathrm{Bin}(n_k, 1/2 - n/4m + O(n^2/m^2))$ random variables.
    Hence
    \begin{align*}
        \Ex[X^\pm] = -\frac{n}{4m} n_k (1 + o_c(1)) = O_c(n^{3/4})
        \implies \Ex[X] = O_c(n^{3/4}),
    \end{align*}
    and thus
    \begin{align*}
        \frac{q_{n,m}(k - 1)}{q_{n,m}(k)} - 1
        &= \frac{1}{4c^2 \sqrt{n}} + O_c\left( \frac{1}{n} \right).
    \end{align*}
    Rearranging and using
    $e^x=1+x+O(x^2)$ gives the desired result.
\end{proof}

\subsection{On the mean and variance}\label{sec:meanvar}
We will need some control on the means and variances of $V_n$ and $V_{n,m}$, as well as showing a central limit theorem for both of them. For the former, \cite{warren_seneta_1996} shows that
\begin{equation}
\label{eq:uniform-clt}
    \left|
        F_{V_n}(k) - \Phi\left(
            \frac{k - \mu_n}{\sigma_n}
        \right)
    \right| = O\left( \frac{1}{\sqrt{n}} \right)
\end{equation}
uniformly in $k$,
where $$\mu_n \coloneqq \Ex[V_n] = \frac{n-2}{3}, \quad \sigma_n^2 \coloneqq \Var(V_n) = 
\frac{2n+2}{45}.$$
Their argument leverages the property that
$V_n$ has the same distribution as 
$X_0 + \cdots + X_{u_n}$, where
the $X_i$ are independent Bernoulli random variables with some parameters $p_i$, with $0<p_i<1$.\\ \\ %
We now turn our attention to $V_{n,m}$. While it cannot be represented directly as the sum of i.i.d.\ random variables, we will use our stochastic domination construction to get around the issue. In particular,
we will prove a central limit theorem for 
$V_{n,m}^{\pm}$, which we recall
bound $V_{n,m}$ from below and above. Since $V_{n,m}^\pm$ are respectively
$(\log \delta_{n,m}^\pm)$-tilted $V_n$'s, they
have the same distribution as the sum
of $(\log \delta_{n,m}^\pm)$-tilted $X_i$'s
and thus we can apply \cite[][Lemma 2]{warren_seneta_1996}.
For this to be useful, we need estimates of the
mean and variance of $V_{n,m}^\pm$,
which the following lemma provides.

\begin{lemma}
\label{lem:valley-stoc-bound-mean-var}
    Let $m = cn^{5/4}$ with $c > 0$ fixed. Then
    \[
        \Ex[V_{n,m}^\pm] = \mu_n - \sqrt{n} / 90c^2 + O_c(1);
        \qquad
        \Var(V_{n,m}^\pm) = \sigma_n^2 + O_c(\sqrt{n}).
    \]
\end{lemma}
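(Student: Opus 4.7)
The plan is to leverage two facts invoked in the text: first, by Warren--Seneta, $V_n \stackrel{d}{=} X_0 + \cdots + X_{u_n}$ with the $X_i$ independent Bernoulli$(p_i)$; second, by construction, $V_{n,m}^\pm$ has p.m.f.\ proportional to $(\delta_{n,m}^\pm)^k p_n(k)$, i.e., it is the exponential tilt of $V_n$ with tilt parameter $\theta^\pm \coloneqq \log \delta_{n,m}^\pm$. Since exponential tilting factorizes across independent summands, it follows that $V_{n,m}^\pm \stackrel{d}{=} X_0^\pm + \cdots + X_{u_n}^\pm$, where the $X_i^\pm$ are independent Bernoulli$(p_i^\pm)$ with
\[
p_i^\pm \;=\; \frac{p_i e^{\theta^\pm}}{1-p_i+p_ie^{\theta^\pm}}.
\]

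The second step is to Taylor-expand $p_i^\pm$ in $\theta^\pm$. Lemma~\ref{lem:shelf-shuffle-likelihood-ratio-ratio} gives $\theta^\pm = -\tfrac{1}{4c^2\sqrt{n}} + O_c(1/n)$, and a direct calculation using $e^{\theta^\pm} = 1 + \theta^\pm + O((\theta^\pm)^2)$ together with $(1+x)^{-1}=1-x+O(x^2)$ yields, uniformly in $i$,
\[
p_i^\pm \;=\; p_i + p_i(1-p_i)\,\theta^\pm + O\!\left((\theta^\pm)^2\right).
\]
Summing over $i$, recalling $\sum_i p_i = \mu_n$ and $\sum_i p_i(1-p_i) = \sigma_n^2 = (2n+2)/45$, and noting that the sum has $u_n+1 = O(n)$ terms, I get
\[
\Ex[V_{n,m}^\pm] \;=\; \mu_n + \theta^\pm \sigma_n^2 + O_c\!\left((\theta^\pm)^2\cdot n\right) \;=\; \mu_n + \left(-\tfrac{1}{4c^2\sqrt{n}}\right)\cdot\tfrac{2n+2}{45} + O_c(1) \;=\; \mu_n - \tfrac{\sqrt{n}}{90c^2} + O_c(1),
\]
since $(\theta^\pm)^2 n = O_c(1)$ and the subleading piece of $\theta^\pm$ contributes $O_c(1/n)\cdot n = O_c(1)$.

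For the variance, independence again gives $\Var(V_{n,m}^\pm) = \sum_i p_i^\pm(1-p_i^\pm)$. Expanding the product,
\[
p_i^\pm(1-p_i^\pm) \;=\; p_i(1-p_i) + p_i(1-p_i)(1-2p_i)\,\theta^\pm + O\!\left((\theta^\pm)^2\right),
\]
so
\[
\Var(V_{n,m}^\pm) \;=\; \sigma_n^2 + \theta^\pm \sum_{i}p_i(1-p_i)(1-2p_i) + O_c\!\left((\theta^\pm)^2 \cdot n\right).
\]
Each term in the middle sum is bounded by a constant and there are $O(n)$ of them, so the sum is $O(n)$, hence multiplying by $\theta^\pm = O_c(1/\sqrt{n})$ produces an $O_c(\sqrt{n})$ correction, and the last term is $O_c(1)$. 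This establishes the variance estimate.

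The only real obstacle is ensuring the Taylor error terms are genuinely uniform in $i$: this is automatic because the $p_i$ lie in $(0,1)$ and $\theta^\pm \to 0$, so every expansion is dominated by an absolute multiple of its leading term. Once that uniformity is in hand, the arithmetic identifying the constant $1/(90c^2)$ is the only calculation that matters, and it comes out of $\sigma_n^2/(4c^2\sqrt{n})$ on the nose.
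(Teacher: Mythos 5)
Your proof is correct and rests on the same core idea as the paper's: decompose $V_n$ into a sum of independent Bernoulli$(p_i)$ variables, observe that the exponential tilt of a sum of independent variables is the sum of tilted independent variables, and Taylor-expand in the tilt parameter $\theta^\pm = \log\delta_{n,m}^\pm$ using Lemma~\ref{lem:shelf-shuffle-likelihood-ratio-ratio} to pin down the order of magnitude. The only difference is bookkeeping: the paper carries out the expansion through the cumulant generating function $K_n(x) = \sum_i \log(1 - p_i + p_i e^x)$, bounding the Lagrange remainder via a uniform estimate on $K_n'''$ over the relevant interval, whereas you expand each tilted parameter $p_i^\pm$ (and $p_i^\pm(1-p_i^\pm)$) directly, controlling the error terms by noting that the relevant derivatives of $p \mapsto p e^\theta/(1-p+pe^\theta)$ are uniformly bounded on $[0,1]\times[-a,a]$. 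These are the same calculation; your version is marginally more explicit, the paper's slightly more compact because differentiating $K_n$ at $\theta^\pm$ gives the tilted mean and variance in one stroke. Both correctly extract the leading coefficient $\sigma_n^2 \cdot (-\tfrac{1}{4c^2\sqrt n}) = -\sqrt{n}/(90c^2)$ for the mean shift and the $O_c(\sqrt n)$ variance correction.
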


\begin{proof}
Let $K_n(x)\coloneqq\log \mathbb E[e^{xV_n}]$ be the cumulant generating function of $V_n$. Since $V^{\pm}_{n,m}$ are exponential tilts of $V_n$ by a factor $\log\delta^{\pm}_{n,m}$, their c.g.f.s are given by
    $$K^{\pm}_n(x)=K_n(\log \delta_{n,m}^{\pm} + x) - K_n(\log \delta_{n,m}^{\pm}).$$ In particular, we can obtain cumulants of $V^{\pm}_{n,m}$ by differentiating $K_n$ and evaluating at $\log \delta_{n,m}^{\pm}$.
    The independent Bernoulli decomposition of $V_n$ indicates
    that
    \[
        K_n(x)= \sum_{i = 0}^{u_n} \log(1 - p + pe^x)
    \]
    where $0 < p_i < 1$ for each $i$. From \cite{warren_seneta_1996},
    we know that 
    $$
    \mathbb E[V_n]=K'_{n}(0)=\frac{n}{3}+O(1), \quad \Var(V_{n})=K_n^{''}(0)=\frac{2n}{45}+O(1).
    $$
    The idea now is to use a Taylor approximation to the third order with Lagrange remainder, and exploiting our detailed knowledge of $\delta_{n,m}^{\pm}$. To this aim, observe that by means of Lemma \ref{lem:shelf-shuffle-likelihood-ratio-ratio}, we can write
$$
\log \delta^{\pm}_{n,m}=-\frac{1}{4c^2\sqrt n}+O_c\left(\frac{1}{n}\right).
$$
As for the third derivative, we can use the bound $$u_n=O(n)$$ and the explicit computation to deduce
$$
\sup_{-\log \delta_{n,m}^-\leq x\leq 
\log \delta^+_{n,m}, p\in [0,1]}\frac{d^3}{dx^3}\log(1-p+pe^x)=O_c(1).
$$
All together, this allows to bound
\begin{align*}
M \coloneqq{}& \sup_{-\log \delta_{n,m}^-\leq x\leq 
\log \delta^+_{n,m}}\left|\frac{d^3K_n}{dx^3}(x)\right|\\
={}& O_c(n).
\end{align*}
Therefore, we obtain for the mean
\begin{align*}
\mathbb E[V_{n,m}^{\pm}]-\mathbb E[V_n]&= (K^{\pm}_n)'(0)-K'_{n}(0)\\&=
K'_n\left(\log\delta^{\pm}_{n,m}\right)-K'_n(0)
\\&=
-\frac{K^{''}_n(0)}{4c^2\sqrt n}+O_c\left(\frac{1}{n}M\right)\\&=-\frac{\sqrt n}{90c^2}+O_c\left(1\right)
\end{align*}
and similarly, for the variances,
\begin{align*}
\Var(V_{n,m}^{\pm})-\Var(V_n)&=(K^{\pm}_n)^{''}(0)-K^{''}_{n}(0)\\&=K^{''}_n(\log\delta^{\pm}_{n,m})-K^{''}_n(0) \\& =O_c\left(\frac{1}{\sqrt n}M\right)\\&=O_c(\sqrt n)
\end{align*}
thus concluding the proof.
\end{proof}
 
\section{Proof of main theorem}\label{sec:proofmainthm}
We now combine all our findings to prove our main result.



\begin{proof}[Proof of Theorem~\ref{thm:main}]
    Let $\mu_{n,m}^\pm \coloneqq \Ex[V_{n,m}^\pm]$
    and $(\sigma_{n,m}^\pm)^2 \coloneqq \Var(V_{n,m}^\pm)$.
    From \cite[][Lemma 2]{warren_seneta_1996} we can deduce a central limit theorem for $V^{\pm}_{n,m}$, and thanks to Lemma \eqref{lem:valley-stoc-bound-mean-var} we can estimate the error term as
    \begin{align*}
        \left|
            F_{V_{n,m}^\pm}(k) - \Phi\left(
                \frac{k - \mu_{n,m}^\pm}{\sigma_{n,m}^\pm}
            \right)
        \right|
        \le \frac{6}{\sigma_{n,m}^\pm}
        = O_c \left( \frac{1}{\sqrt{n}} \right)
    \end{align*}
    uniformly in $k$. Fix $x\in \mathbb R$, and let  
$$
k=\mu_n+x\sigma_n.
$$
From Lemma \ref{lem:valley-stoc-bound-mean-var}, we deduce that 
$$
\frac{k - \mu_{n,m}^\pm}{\sigma_{n,m}^\pm}=x\frac{\sigma_{n}}{\sigma^{\pm}_{n,m}}+\frac{\mu_n-\mu_{n,m^{\pm}}}{\sigma_{n,m}^{\pm}}=x\left(1+O_{c}\left(\frac{1}{\sqrt n}\right)\right)+\frac{1}{6\sqrt {10} c^2}+O_c\left(\frac{1}{\sqrt n}\right).
$$
In particular, from the central limit theorem for $V_n$ and $V^{\pm}_{n,m}$ and the stochastic domination of $V_{n,m}$,
$$
F_{V^{\pm}_{n,m}}(k)-F_{V_n}(k)=\Phi\left( x + \frac{1}{6c^2 \sqrt{10}} \right)
            - \Phi(x) + O_c\left( \frac{1}{\sqrt{n}} \right).
$$
with a uniform error as long as $x$ ranges in a compact set. Notice that the right side is maximized at
$$
x=\frac{1}{12\sqrt{10} c^2},
$$
resulting in a maximum of 
$$
\Phi\left(-\frac{1}{12c^2\sqrt{10}}\right)-\Phi\left(\frac{1}{12c^2\sqrt{10}}\right)=1-2\Phi\left(-\frac{1}{12c^2\sqrt{10}}\right).
$$
It remains to control the tails of $V_n$ and $V^{\pm}_{n,m}$. A Chebyschev inequality gives
$$
\mathbb P(|V_n-\mu_n|\geq \sigma_n |x|)\leq \frac{1}{x^2}<\frac{1-2\Phi\left(-\frac{1}{12c^2\sqrt{10}}\right)}{2}
$$
for $|x|$ sufficiently large. A similar argument works for $V^{\pm}_{n,m}$, exploiting the mean and variance asymptotic from Lemma \ref{lem:valley-stoc-bound-mean-var}. Since our monotonicity result guarantees that for all $k$
$$
F_{V^{+}_{n,m}}(k)\leq F_{V_{n,m}}(k)\leq F_{V^-_{n,m}}(k),
$$
we conclude that for $m=cn^{5/4}$ 
$$
\max_k \left[\Pr(V_{n,m} \le k) - \Pr(V_n \le k)\right]=1-2\Phi\left(-\frac{1}{12c^2\sqrt{10}}\right)+O_c\left(\frac{1}{\sqrt n}\right).
$$
Because of \eqref{eq:shelf-shuffle-tv}, we thus conclude
\begin{align*}
\dtv(\nu_{n,cn^{5/4}}, \nu_n)&=\max_{k}\left[\mathbb P(V_{n,m}\leq k)-\mathbb P(V_n\leq k)\right]\\&=1-2\Phi\left(\frac{1}{12c^2\sqrt{10}}\right)+O_c\left(\frac{1}{\sqrt n}\right).
\end{align*} 
\end{proof}

\printbibliography

\end{document}